\documentclass{amsart}

\usepackage{latexsym}
\usepackage{amssymb}
\usepackage{amsmath,mathabx,mathtools}
\usepackage{natbib}

\newtheorem{theorem}{Theorem}[section]
\newtheorem{lemma}[theorem]{Lemma}
\newtheorem{proposition}[theorem]{Proposition}
\newtheorem{corollary}[theorem]{Corollary}

\theoremstyle{definition}
\newtheorem{definition}[theorem]{Definition}

\theoremstyle{remark}
\newtheorem{remark}[theorem]{Remark}

\numberwithin{equation}{section}

\newcommand{\C}{\mathbb{C}}
\newcommand{\R}{\mathbb{R}}

\newcommand{\E}{\mathcal{E}}

\newcommand{\f}{\varphi}

\newcommand{\Aut}{\operatorname{Aut}}
\newcommand{\WAP}{\operatorname{WAP}}
\newcommand{\wap}{\operatorname{wap}}

\newcommand{\Int}{\operatorname{Int}}

\newcommand{\Om}{\Omega}

\newcommand{\om}{\omega}

\newcommand{\la}{\langle}
\newcommand{\ra}{\rangle}
\newcommand{\Cb}{\operatorname{C_b}}
\newcommand{\Cbr}{\operatorname{C_{b,r}}}
\newcommand{\ol}{\overline}
\newcommand{\mk}{\mathfrak m}

\begin{document}

\title[$G$-finite vNa]{$G$-finite von Neumann algebras and weakly almost periodic functions}

\author{Paul Jolissaint}
\address{Universit\'e de Neuch\^atel,
       Institut de Math\'ematiques,       
       E.-Argand 11,
       2000 Neuch\^atel, Switzerland}
       
\email{paul.jolissaint@ik.me}

\subjclass[2010]{Primary 46L10, 43A60, 22D25; Secondary 64A50}

\date{\today}

\keywords{Almost periodic functions, unitary group, $G$-finite von Neumann algebra, invariant mean, conditional expectation}

\begin{abstract}
Let $M$ be a von Neumann algebra which acts in a standard way on the Hilbert space $H$, let $G$ be a subgroup of the group of all $*$-automorphisms of $M$. We prove that $M$ is $G$-finite (in the sense of I. Kov\'acs and Sz\"ucs, i.e. the set of all normal, $G$-invariant states on $M$ is separating) if and only if, for every $x\in M$ and all $\xi,\eta\in H$, the coefficient function $g\mapsto \la g(x)\xi|\eta\ra$ is weakly almost periodic.
\end{abstract} 

\maketitle

\section{Introduction}

Let $M$ be a von Neumann algebra, let $M_*$ be its predual, let $\Aut(M)$ be the group of all $*$-automorphisms of $M$; we consider here exclusively the action of $M$ on its standard form $(M,H,J,P)$ as in \citep{haa} and in \citep[Definition X.1.3]{Tak2}: $H$ is a Hilbert space with $M\subset B(H)$, $J$ is a conjugate linear, isometric involution on $H$, $P$ is a selfdual cone in $H$ (i.e. $P=\{\eta\in H\colon \la\eta|\xi\ra\geq 0\ \forall \xi\in P\}$) with the properties
\begin{enumerate}
\item $JMJ=M'$, the commutant of $M$ in $B(H)$;
\item $JcJ=c^*$ for every $c\in Z(M)$, the center of $M$;
\item $J\xi=\xi$ for every $\xi\in P$;
\item $aJaJ(P)\subset P$ for every $a\in M$.
\end{enumerate}
It follows from uniqueness of the standard form of $M$ that the group $\Aut(M)$ admits a unitary implementation, i.e. there is a unique unitary representation $u:\Aut(M)\rightarrow U(H)$ such that $u(g)xu(g^{-1})=g(x)$ for all $g\in \Aut(M), x\in M$ and such that $J=u(g)Ju(g^{-1}), u(g)P=P$ for every $g\in \Aut(M)$. Moreover, $u$ is a homeomorphism of $\Aut(M)$ onto a closed subgroup of $U(H)$ where the first is equipped with the u-topology (generated by the seminorms $\Phi\mapsto \|\f\circ\Phi\|, \f\in M_*$) and the latter with the strong ( = weak) operator topology. See for instance \citep[Section 3]{haa}.

\medskip
Given a subgroup $G$ of $\Aut(M)$, I. Kov\'acs and J. Sz\"ucs introduced the notion of $G$\textit{-finite} von Neumann algebras in \citep{KS}, which means that the set $S_*^G(M)$ of all normal, $G$-invariant states on $M$ separates the points of $M$. They proved among others that this is equivalent to the existence of a suitable conditional expectation $\E^G$ from $M$ onto the fixed point subalgebra $M^G=\{x\in M\colon g(x)=x\ \forall g\in G\}$. See the more precise statement in the next theorem below. It is worth mentioning that other characterizations were added, namely by E. St\o rmer in \citep{Stoermer}.

Before stating these characterizations, we need to recall some facts and fix notation. For every $x\in M$, let $K_G(x)$ be the $\sigma$-weakly closed convex hull of the orbit $\{g(x)\colon g\in G\}$ of $x$ and, for $\psi\in M_*$, set $G(\psi)=\{\psi\circ g\colon g\in G\}$.

Finally, let $B(M)$ (resp. $B_*(M)$) be the Banach space of all linear, bounded maps $\Phi$ on $M$ (resp. linear, bounded, normal maps on $M$). It turns out that $B(M)$ is a dual Banach space, more precisely, it is the dual of the projective tensor product $M\otimes_\pi M_*$ with respect to the duality bilinear form
\[
\la x\otimes \f,\Phi\ra=\f(\Phi(x))\quad (x\in M, \f\in M_*,\Phi\in B(M)).
\]
Thus, this means that $B(M)$ is equipped with a natural weak$^*$ topology. For details, see for instance \citep[Theorem IV.2.3]{Tak1}.

\medskip
Here are the promised characterizations of $G$-finiteness.

\begin{theorem}\label{thm1.1}
Let $M$ be a von Neumann algebra and let $G$ be a subgroup of $\Aut(M)$. Then the following conditions are equivalent:
\begin{enumerate}
\item $M$ is $G$-finite: if $x\in M_+$ is such that $\f(x)=0$ for every $\f\in S_*^G(M)$, then $x=0$.
\item For every $x\in M$, the set $K_G(x)\cap M^G$ contains exactly one element denoted by $\E^G(x)$; the map $\E^G:M\rightarrow M^G$ is a normal, faithful conditional expectation which is $G$-invariant (i.e. $\E^G\circ g=\E^G$ for every $g\in G$).
\item For every $\psi\in M_*$, its orbit $G(\psi)$ is relatively $\sigma(M_*,M)$-compact.
\item The $w^*$-closure of $G$ in $B(M)$ is contained in $B_*(M)$.
\end{enumerate}
Moreover, if these conditions are satisfied, then $\E^G$ is unique.
\end{theorem}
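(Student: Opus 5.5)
I would prove the equivalences through the cycle (1)$\Rightarrow$(3)$\Rightarrow$(4)$\Rightarrow$(2)$\Rightarrow$(1), and then derive uniqueness of $\E^G$ from (2).

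\emph{(1)$\Rightarrow$(3).} Assume $M$ is $G$-finite. The first step is to show that the family $S_*^G(M)$ can be replaced by a single well-behaved object. By Zorn's lemma, choose a maximal family of $G$-invariant normal states whose supports $e_i\in M^G$ are mutually orthogonal. $G$-finiteness then forces $\sum e_i=1$. On each corner $e_iMe_i$ the state $\f_i$ is faithful and $G$-invariant.

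For $\psi\in M_*$, relative weak compactness of $G(\psi)$ follows from Akemann's criterion. A bounded set $K\subset M_*$ is relatively $\sigma(M_*,M)$-compact iff $\sup_{\f\in K}|\f(p_n)|\to 0$ for every decreasing sequence of projections $p_n\downarrow 0$; equivalently, iff it is uniformly absolutely continuous with respect to a fixed faithful normal positive functional. For $\psi\circ g$ one has
\[
|\psi(g(x))|\le\|\psi\|^{1/2}\,\psi(g(x^*x))^{1/2}.
\]
The strategy is first to treat $\psi$ dominated by a multiple of a $G$-invariant normal state $\f$, i.e. $|\psi(x)|\le C\f(x^*x)^{1/2}$. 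Then $|\psi(g(p))|\le C\f(p)^{1/2}$, which gives the uniform estimate needed for Akemann's criterion. Next, such $\psi$ are norm dense in $M_*$: use the $\f_i$, the $e_i$, and the Radon--Nikodym/vector representation (vectors of the form $a\xi_\f$ with $a\in M'$ are dense in the support of the cyclic representation). Finally, relative weak compactness of orbits passes to norm limits, since $g$ acts isometrically on $M_*$. This density step is where I expect the main obstacle: it requires care with the non-$\sigma$-finite case, which is handled by the orthogonal decomposition.

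\emph{(3)$\Rightarrow$(4).} Take a net $g_\alpha\to\Phi$ weak$^*$ in $B(M)$. Then for each $\psi\in M_*$ the net $\psi\circ g_\alpha$ converges in $\sigma(M^*,M)$ to $\psi\circ\Phi$. By (3) the orbit $G(\psi)$ is relatively compact in $\sigma(M_*,M)$, so its closure lies in $M_*$, and the limit functional $\psi\circ\Phi$ is normal. Since this holds for every $\psi\in M_*$, the map $\Phi$ is normal.

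\emph{(4)$\Rightarrow$(2).} Let $\ol{G}$ be the weak$^*$ closure of $G$ and $C$ its closed convex hull in $B(M)$. This is a compact semigroup of normal unital completely positive maps; it is a semigroup because composition with normal maps is separately weak$^*$ continuous. Apply the Ryll-Nardzewski or Kakutani fixed point theorem (or an idempotent in the minimal ideal) to obtain $E\in C$ with $g\circ E=E\circ g=E$ for all $g\in G$. Then:
\begin{itemize}
\item $E$ is normal by (4);
\item $E$ is idempotent with range $M^G$;
\item $E(x)\in K_G(x)\cap M^G$.
\end{itemize}
Uniqueness of the element of $K_G(x)\cap M^G$ follows because any $y$ in this intersection satisfies $E(y)=y$, while $E$ is constant on $K_G(x)$ by normality and $G$-invariance, so $y=E(x)$. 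Faithfulness comes from normality: if $E(x^*x)=0$, then $\f(x^*x)=0$ for every $\f=\omega\circ E$ with $\omega\in (M^G)_*$ a state. The normal $G$-invariant states of this form separate $M$ because $E$ is in the closed convex hull of $G$ and the $\omega\circ E$ are normal. This last point needs checking, and I would also obtain it by applying (3) to the support projection.

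\emph{(2)$\Rightarrow$(1).} For $x\in M_+$ with $x\neq0$, we have $\E^G(x)\ne0$ by faithfulness. Choose a normal state $\omega$ on $M^G$ with $\omega(\E^G(x))>0$. Then $\omega\circ\E^G$ is a normal $G$-invariant state that does not vanish at $x$.

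\emph{Uniqueness.} Any $G$-invariant normal conditional expectation $F$ onto $M^G$ is constant on $K_G(x)$, so $F(x)=F(\E^G(x))=\E^G(x)$.
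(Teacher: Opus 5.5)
The paper does not prove this theorem; it quotes Kov\'acs--Sz\"ucs for (1)$\Leftrightarrow$(2) and St\o rmer for the rest, so your argument has to stand on its own. Your (1)$\Rightarrow$(3), (3)$\Rightarrow$(4), (2)$\Rightarrow$(1) and the uniqueness argument are sound. For (1)$\Rightarrow$(3) this means Akemann's criterion, the bound $|\psi(g(p))|\le C\f(p)^{1/2}$, norm density, and passage to norm limits.

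\textbf{The gap: the fixed point in (4)$\Rightarrow$(2).} This step carries all of the Kov\'acs--Sz\"ucs content, and the existence of $E\in C$ with $g\circ E=E\circ g=E$ is only asserted. None of the tools you name gives it as stated. Kakutani needs an equicontinuous action and Ryll-Nardzewski a noncontracting one. For $\Phi\mapsto g\circ\Phi$ the weak$^*$ topology of $B(M)$ gives neither: for a mixing measure-preserving action, $g_n\circ(\mathrm{id}-\E^G)\to0$ weak$^*$ as $g_n\to\infty$. An idempotent in the minimal ideal of $C$ does not help either. Right-fixed points $E\circ g=E$ are such idempotents, and proving that this ideal is a single point is the same problem. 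Yet left invariance $g\circ E=E$ is exactly what puts $E(x)$ into $M^G$.

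There are two ways to repair this.
\begin{enumerate}
\item Use the seminorms $\Phi\mapsto\sup_{h\in G}|\psi(h(\Phi(x)))|$. They are invariant under left composition. By (3) and Krein--\v{S}mulian, their continuous functionals are exactly the maps $\Phi\mapsto\sum_k\omega_k(\Phi(x_k))$ with $\omega_k\in M_*$, so $C$ is weakly compact for them and Ryll-Nardzewski gives $E_2\in C$ with $g\circ E_2=E_2$. The seminorms $\Phi\mapsto\|\psi\circ\Phi\|$ give $E_1\in C$ with $E_1\circ g=E_1$, and then $E_1=E_1\circ E_2=E_2$.
\item Alternatively, prove (3)$\Rightarrow$(1) directly. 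Let $s$ be the supremum of the supports of the normal $G$-invariant states and suppose $s\neq1$. For a normal state $\omega$ of $(1-s)M(1-s)$, a Ryll-Nardzewski fixed point in $\overline{\mathrm{conv}}\,G(\omega)$ is a normal $G$-invariant state supported under $1-s$, a contradiction. Then run Ryll-Nardzewski on $K_G(x)$ with the invariant seminorms $y\mapsto\f(y^*y)^{1/2}$, $\f\in S_*^G(M)$.
\end{enumerate}

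\textbf{Normality of $C$.} Condition (4) says only that $\overline{G}$ consists of normal maps, not its closed convex hull $C$. You use normality on all of $C$, both for the semigroup property and for ``$E$ is normal by (4)''. This needs (4)$\Rightarrow$(3) followed by Krein--\v{S}mulian in $M_*$.

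\textbf{Faithfulness.} Your faithfulness argument is circular. Saying that the states $\omega\circ E$ separate $M$ is just faithfulness of $E$ restated, and $E\in C$ does not imply it. The correct argument uses normality and invariance:
\begin{itemize}
\item $\{x\colon E(x^*x)=0\}$ is a $\sigma$-weakly closed left ideal $Mq$;
\item it is $G$-invariant because $E\circ g=E$, so $q\in M^G$;
\item hence $q=E(q)=0$.
\end{itemize}
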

Equivalence between (1) and (2) is due to I. Kov\'acs and J. Sz\"ucs \citep{KS}, and equivalences between (1), (3) and (4) are due to E. St\o rmer \citep{Stoermer}. 

 Observe that if $G=\Int(M)$ is the group of all inner automorphisms of $M$, then $M$ is $\Int(M)$-finite if and only if it is a finite von Neumann algebra. Apparently, the latter case inspired the authors of \citep{KS}. E. St\o rmer was inspired by the article of F.J. Yeadon \citep{Yeadon} who proved the existence of the canonical center-valued trace on a finite von Neumann algebra as a consequence of Ryll-Nardzewski fixed point theorem.

\medskip
It turns out that one of the main results of our article \citep[Theorem 3.1]{Jol} is a characterization of finite von Neumann algebras in terms of suitable almost weakly periodic functions on the unitary group $U(M)$. Thus, the aim of the present note is to generalize the latter theorem to the more general case of arbitrary subgroups of $\Aut(M)$. Here is our main result; the precise definition of weakly almost periodic functions is recalled in the next section.

\begin{theorem}\label{mainThm}
Let $M$ be a von Neumann algebra which acts standardly on $H$ and $G$ be a subgroup of $\Aut(M)$. Then $M$ is $G$-finite if and only if, for every $x\in M$ and all $\xi,\eta\in H$, the coefficient function $g\mapsto \la g(x)\xi|\eta\ra$ is weakly almost periodic on $G$.
\end{theorem}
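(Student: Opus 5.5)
We use the usual definition: a bounded function $f$ on $G$ is weakly almost periodic when its set of left translates $\{f(h\,\cdot)\colon h\in G\}$ (equivalently, its set of right translates) is relatively weakly compact in the Banach space of bounded functions on $G$. By Grothendieck's double limit criterion, this holds if and only if for all sequences $(g_m)$, $(h_n)$ in $G$,
\[
\lim_m\lim_n f(g_mh_n)=\lim_n\lim_m f(g_mh_n)
\]
whenever both iterated limits exist. The plan is to reduce both directions to condition (3) of Theorem \ref{thm1.1}, namely relative $\sigma(M_*,M)$-compactness of the orbits $G(\psi)$. Throughout we use the unitary implementation $u$, for which $g(x)=u(g)xu(g)^*$. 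Put $\omega_{\xi,\eta}(y)=\la y\xi|\eta\ra$, so that $\la g(x)\xi|\eta\ra=(\omega_{\xi,\eta}\circ g)(x)$.

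\emph{$G$-finite $\Rightarrow$ WAP.} Fix $x\in M$ and $\xi,\eta\in H$, and let $f(g)=\la g(x)\xi|\eta\ra$. Given sequences $g_m,h_n$, we have
\[
f(g_mh_n)=\la g_m(h_n(x))\xi|\eta\ra=(\omega_{\xi,\eta}\circ g_m)(h_n(x)).
\]
By Theorem \ref{thm1.1}(3), the orbit $\{\omega_{\xi,\eta}\circ g\}$ is relatively weakly compact in $M_*$. The orbit $\{h(x)\}$ is bounded in $M$, hence relatively $\sigma(M,M_*)$-compact. We then run Grothendieck's criterion for the pairing $M_*\times M$, where one set is relatively weakly compact in the Banach space $M_*$ and the other is bounded in the dual $M=(M_*)^*$. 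Pass to subnets: $\omega_{\xi,\eta}\circ g_m\to\psi$ weakly in $M_*$, and $h_n(x)\to y$ $\sigma$-weakly. Weak convergence in $M_*$ means convergence against every element of $M=(M_*)^*$. Hence both iterated limits equal $\psi(y)$, after a standard subsequence argument identifying the limits along the given sequences. This shows that $f$ is WAP. Alternatively, one can check directly that the right translates $\{f(\cdot\,h)\}$ lie in the image of the bounded set $\{h(x)\}$ under the map $y\mapsto(g\mapsto(\omega_{\xi,\eta}\circ g)(y))$. This map $T\colon M\to \ell^\infty(G)$ is adjoint-like; it is weakly compact because it factors through the relatively weakly compact orbit (Gantmacher).

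\emph{WAP $\Rightarrow$ $G$-finite.} Here we prove condition (3). Every $\psi\in M_*$ is of the form $\omega_{\xi,\eta}$ with $\xi,\eta\in H$, since the action is standard (indeed one can even take $\psi=\omega_{\xi,\eta}$ with the polar decomposition in the standard form). So it suffices to show that $\{\omega_{\xi,\eta}\circ g\colon g\in G\}$ is relatively $\sigma(M_*,M)$-compact. By Eberlein–Šmulian, take a sequence $g_m$ and an arbitrary sequence $y_n$ in the unit ball of $M$. We must check the double limit condition for $\la g_m(y_n)\xi|\eta\ra$, via Grothendieck's criterion for relative weak compactness in $M_*$: a bounded set $A\subset M_*$ is relatively weakly compact if and only if $\lim_m\lim_n\psi_m(y_n)=\lim_n\lim_m\psi_m(y_n)$ for all $\psi_m\in A$ and bounded $y_n$ such that both limits exist.

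\textbf{Main obstacle.} The WAP hypothesis only controls $y_n=h_n(x)$ for a single fixed $x$, not arbitrary bounded $y_n$. To overcome this, I would rather use Akemann's criterion. A bounded set $A\subset M_*$ is relatively weakly compact if and only if $\psi(e_n)\to0$ uniformly on $A$ for every decreasing sequence of projections $e_n\to0$; equivalently, for every orthogonal sequence of projections $p_n$ in $M$, $\sup_{\psi\in A}|\psi(p_n)|\to 0$. I would try to produce a single element $x=\sum_n\lambda_n p_n$, with suitably chosen signs or scalars and the orthogonal $p_n$, together with group elements, so that a failure of Akemann's condition yields a failure of the double limit property for $f(g)=\la g(x)\xi|\eta\ra$. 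Concretely, if $|\la g_n(p_n)\xi|\eta\ra|\ge\varepsilon$, one considers the coefficient functions of the projections $q_S=\sum_{n\in S}p_n$ and extracts a subsequence via a Rosenthal-type diagonal argument. The hard point is that translating by $G$ moves $x$ as a whole, while we need to isolate individual $p_n$.

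A cleaner alternative for this direction, which I would try first, uses condition (4) of Theorem \ref{thm1.1}. WAP of all coefficients means that the functions $g\mapsto\la u(g)xu(g)^*\xi|\eta\ra$ extend continuously to the WAP compactification $G^{\wap}$. Then every $w^*$-limit $\Phi$ of a net $g_i$ in $B(M)$ satisfies that $\la\Phi(x)\xi|\eta\ra$ is the corresponding limit along $G^{\wap}$. WAP of $F(g)=\la g(x)\xi|\eta\ra$ as a function of $g$ gives the existence of the iterated limits of the coefficients. Normality of $\Phi$ would then follow by showing that $\Phi$ preserves increasing nets. For this, use the Stone–Čech/Grothendieck interchange of limits between the net $g_i$ and an increasing net $x_\alpha\nearrow x$, encoding $x_\alpha$ via the right translates. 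Verifying this interchange is the key step.
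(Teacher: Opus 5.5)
Your proof of the forward implication is correct and is essentially the paper's. The paper feeds Grothendieck's double-limit criterion with condition (4) of Theorem \ref{thm1.1} (the w$^*$-cluster points of $(g_j)$, $(h_i)$ in $B(M)$ are normal). You feed it with condition (3) instead, and the computation is the same.

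The converse, however, is not proved: both of your strategies stop at the decisive step, as you acknowledge. Your second strategy is exactly the paper's route:
\begin{itemize}
\item take a w$^*$-limit $\psi$ of $(g_i)$ in $B(M)$ and a sequence $x_j\searrow 0$ in $M_+$;
\item set $f_j=\xi*x_j*\xi$;
\item swap $\lim_i$ and $\lim_j$ in $f_j(g_i)$ by Proposition \ref{Gro1}.
\end{itemize}
The paper obtains the required weak relative compactness of $\{f_j\}$ from Proposition \ref{relcpt}, but this does not fill your gap. Proposition \ref{relcpt} is false as stated. On $G=\mathbb{Z}$, the functions $f_j=\mathbf{1}_{\{n\colon |n|\geq j\}}$ are weakly almost periodic (a constant minus a finitely supported function), decreasing, and $[0,1]$-valued. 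Yet with $x_i=i$ one gets $\lim_i\lim_j f_j(x_i)=0\neq 1=\lim_j\lim_i f_j(x_i)$. The error in its proof is that Dini's theorem does not make $\inf_j h_j$ continuous.

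Moreover, in the situation at hand the needed statement is as strong as the conclusion. Since $f_j\to 0$ pointwise on $G$, and $G$ is dense in the spectrum $\Om$ of $\Cbr(G)$, any weak cluster point of $(f_j)$ in $C(\Om)$ equals the pointwise limit $\inf_j f_j$ and hence vanishes identically. By Dini, $\{f_j\}$ is therefore weakly relatively compact if and only if $\sup_{g\in G}\la g(x_j)\xi|\xi\ra\to 0$. Demanding this for all decreasing sequences of projections is, by Akemann's criterion, the relative weak compactness of $\{\omega_\xi\circ g\colon g\in G\}$. Letting $\xi$ vary, that is condition (3) itself.

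So the missing ingredient is precisely your ``main obstacle''. You need an argument that uses the group structure to transport a failure of Akemann's condition for $\{\omega\circ g\}$ into the orbit of a \emph{single} element. Monotonicity plus weak almost periodicity of each $\xi*x_j*\xi$ is not enough. For example, it would suffice to produce $a_m,b_n\in G$ such that $\{\omega\circ a_m\circ b_n\}_{m,n}$ is equivalent to the unit vector basis of $\ell^1(\mathbb{N}^2)$. The adjoint of that embedding, $M\to\ell^\infty(\mathbb{N}^2)$, $x\mapsto(\omega(a_m(b_n(x))))_{m,n}$, is then onto. Hence some $x\in M$ satisfies $\omega(a_mb_n(x))=\mathbf{1}_{m\leq n}$, and $g\mapsto\omega(g(x))$ violates Corollary \ref{Gro2}.

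Constructing such sequences, or some substitute, is the real content of the converse. Neither your proposal nor the paper as written supplies it. So the implication ``all coefficients weakly almost periodic $\Rightarrow$ $G$-finite'' remains unproved here.
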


\section{Weakly almost periodic coefficient functions}

The proof of Theorem 1.2 requires some preparation on weakly almost periodic functions on topological groups and their properties. 

Thus, let us recall some definitions and fix notation.

\medskip

First, let $G$ be a topological group. We denote by $\Cb(G)$ the $C^*$-algebra of all bounded, continuous, complex-valued functions on $G$ equipped with the uniform norm $\Vert f\Vert_\infty:=\sup_{s\in G}|f(s)|$. 
For $g\in G$ and $f:G\rightarrow\C$, we denote by $g\cdot f:G\rightarrow\C$ (resp. $f\cdot g$) the left (resp. right) translate of $f$ by $g$, i.e.
\[
(g\cdot f)(s)=f(g^{-1}s)\quad\textrm{and}\quad
(f\cdot g)(s)=f(sg)
\]
for all $f:G\rightarrow\C$ and $g,s\in G$. The corresponding left (resp. right) orbit is denoted by $Gf$ (resp. $fG$). A function $f\in \Cb(G)$ is \textit{right uniformly continuous}\footnote{Our definition follows that of P. de la Harpe \citep{HarpeMoy}, contrary to the \cite{EymardMoy}.} if $\Vert g\cdot f-f\Vert_\infty\to 0$ as $g\to 1$. The subset of all right uniformly continuous functions $f\in \Cb(G)$ is a sub-$C^*$-algebra of $\Cb(G)$ denoted by $\Cbr(G)$, and it contains all right translates of all its elements.

A function $f\in \Cbr(G)$ is \textit{weakly almost periodic} if its orbit $Gf$ is weakly relatively compact in $\Cbr(G)$, or equivalently in $\Cb(G)$. It follows from \citep[Proposition 7]{Grothen} that $Gf$ is weakly relatively compact if and only if $fG$ is. The set of all weakly almost periodic functions on $G$ is denoted by $\WAP(G)$; it is a sub-$C^*$-algebra of $\Cbr(G)$, and its main feature, which will play an important role here, is the existence of a unique left and right $G$-invariant mean $\mk_G$ on $\WAP(G)$. One finds a proof of this result for locally compact groups for instance in F.P. Greenleaf's monograph \citep{Greenleaf}, but the proof, based on Ryll-Nardzewski fixed point theorem, works without any change for arbitrary topological groups. See also \citep[Theorem 5.5]{Jol}.

\medskip
Let us state three propositions which yield criteria for weakly almost periodic functions and weak relative compactness of bounded subsets of $\Cb(G)$ or $\Cbr(G)$. The first two are taken essentially from \citep{Grothen} and the third one seems to be new.

\begin{proposition}\label{Gro1}
(\citep[Th\'eor\`eme 6]{Grothen})
Let $G$ be a topological group and let $A\subset \Cb(G)$ be a bounded set. It is weakly relatively compact if and only if, for all sequences $(x_i)\subset G$ and $(f_j)\subset A$ such that the following two limits exist
\[
\ell_1=\lim_i\Big(\lim_j f_j(x_i)\Big)\quad \textrm{and}\quad \ell_2=\lim_j\Big(\lim_i f_j(x_i)\Big)
\]
then $\ell_1=\ell_2$.
\end{proposition}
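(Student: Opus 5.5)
This is Grothendieck's double-limit criterion, and I will prove it by reducing weak compactness in $\Cb(G)$ to pointwise compactness on a compactification of $G$. The first step is to identify $\Cb(G)$ isometrically, as a $C^*$-algebra, with $C(\beta G)$, where $\beta G$ is the Gelfand spectrum of $\Cb(G)$; this is the Stone--\v{C}ech compactification of $G$ viewed as a completely regular space. Each $f\in\Cb(G)$ extends uniquely to a continuous $\hat f$ on $\beta G$, and $G$ embeds as a dense subset. Since the weak topology is intrinsic to the Banach space, $A$ is weakly relatively compact in $\Cb(G)$ if and only if $\hat A=\{\hat f\colon f\in A\}$ is weakly relatively compact in $C(K)$ with $K=\beta G$. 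The group structure plays no role here; only the topology of $G$ enters.

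Next I use the classical facts for $C(K)$ with $K$ compact. By the Riesz representation theorem and dominated convergence, a bounded sequence in $C(K)$ converges weakly if and only if it converges pointwise on $K$. Combined with the Eberlein--\v{S}mulian theorem, this gives the following: a bounded set $\hat A$ is weakly relatively compact if and only if its pointwise closure in $\C^K$ consists of continuous functions. I then apply Grothendieck's interchange-of-limits lemma. For a bounded set $\hat A\subset C(K)$ and a dense subset $D\subset K$ (here $D=G$), the pointwise closure of $\hat A$ in $\C^K$ lies in $C(K)$ if and only if the double limits $\lim_i\lim_j \hat f_j(x_i)$ and $\lim_j\lim_i \hat f_j(x_i)$ coincide whenever both exist, for sequences $(x_i)\subset D$ and $(f_j)\subset A$. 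Since $\hat f_j(x_i)=f_j(x_i)$ for $x_i\in G$, this is exactly the statement.

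The main obstacle is Grothendieck's lemma itself, and I would prove both directions directly.

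For necessity, suppose $A$ is weakly relatively compact. Let $(x_i)$ and $(f_j)$ be sequences with both iterated limits existing. By Eberlein--\v{S}mulian, after passing to a subsequence $f_j\to f$ weakly, so $f_j\to \hat f$ pointwise on $\beta G$. This gives $\ell_2=\lim_i f(x_i)$. Now take a cluster point $p\in\beta G$ of $(x_i)$. Along a subnet, $\hat f(x_i)\to \hat f(p)$, so $\ell_2=\hat f(p)$. Also $\lim_j f_j(x_i)=:a_i$ and $\ell_1=\lim_i a_i$. The $\delta$-measures $\delta_{x_i}$ have weak$^*$ cluster point $\delta_p$, so $\ell_1$ should equal $\lim_j \hat f_j(p)=\hat f(p)$. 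Here I must be careful, since interchanging the limits is precisely what is being proved. I would therefore argue by contradiction, using the convex-combination (Mazur) argument. If $\ell_1\neq\ell_2$, then for large $i$ the value $a_i$ is close to $\ell_1$, while some convex combination $g$ of the tail $(f_j)_{j\ge N}$ is uniformly close to $f$. This gives $|g(x_i)-\ell_2|$ small for large $i$, whereas $g(x_i)$ is close to the corresponding convex combination of the values $f_j(x_i)$, and for $i$ chosen after $N$ these are close to $\ell_1$. That is a contradiction. To arrange the order of quantifiers, I would first extract subsequences diagonally so that $f_j(x_i)$ is within $\varepsilon$ of $a_i$ for $j\ge i$.

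For sufficiency, I take $\varphi$ in the pointwise closure of $\hat A$ on $K$ and suppose $\varphi$ is discontinuous at some $p$. I then build sequences inductively. At step $n$, choose $f_n\in A$ approximating $\varphi$ within $1/n$ at the finitely many points $p,x_1,\dots,x_{n-1}$. Then choose $x_n\in G$ near $p$, which is possible by density of $G$, with $|\hat f_k(x_n)-\hat f_k(p)|<1/n$ for $k\le n$ and $|\varphi(x_n)-\varphi(p)|\ge\varepsilon$. Passing to subsequences, both iterated limits exist by boundedness and a diagonal argument. The first iterated limit equals $\lim_i\varphi(x_i)$, while the second equals $\varphi(p)$, so they differ. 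The delicate point is selecting $x_n\in G$ with the discontinuity witnessed on $G$ rather than on $K$. I would handle this by choosing first $q\in K$ near $p$ with $|\varphi(q)-\varphi(p)|\ge\varepsilon$, and then $x_n\in G$ near $q$ where some $\hat f_m$ approximates $\varphi(q)$. This needs one more layer of indexing, but the same diagonal scheme handles it.

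Finally, weak compactness of the pointwise closure then follows from Eberlein--\v{S}mulian together with the sequence criterion for $C(K)$ above.
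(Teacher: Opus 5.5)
The paper does not prove this proposition; it only quotes Grothendieck's Th\'eor\`eme 6, so your reduction to $C(\beta G)$ has to stand on its own. The sufficiency half has a genuine gap exactly where you flag it, and the remedy you propose does not close it.

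In your scheme the later $f_j$ approximate $\varphi$ at $x_i$. Hence $\lim_j f_j(x_i)=\varphi(x_i)$, and the first iterated limit is $\lim_i\varphi(x_i)$. What the argument therefore needs is $|\varphi(x_n)-\varphi(p)|\ge\varepsilon$ with $x_n\in G$. Choosing $x_n$ near some $q\in K$ so that one $\hat f_m(x_n)$ is close to $\varphi(q)$ says nothing about $\varphi(x_n)$, because $\varphi$ is not continuous. Letting the later $f_j$ approximate $\varphi$ at $q$ instead does not help either: then $f_j(x_n)$ for $j>n$ is uncontrolled, since $x_n$ is fixed before $f_j$ and only finitely many continuity conditions can be imposed when choosing it.

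What is needed is not more indexing but a change of base point. Suppose some open $U\ni p$ satisfies $|\varphi(x)-\varphi(p)|<\varepsilon/2$ for all $x\in U\cap G$. Pick $q\in U$ with $|\varphi(q)-\varphi(p)|\ge\varepsilon$. Then $|\varphi(x)-\varphi(q)|>\varepsilon/2$ for all $x\in U\cap G$, and by density every neighbourhood of $q$ meets $U\cap G$. So at $p$ or at such a $q$ the discontinuity is witnessed by points of $G$ (with $\varepsilon/2$), and your inductive construction runs verbatim at that point.

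The second gap is the final step, together with your earlier claim that ``weakly relatively compact iff the pointwise closure lies in $C(K)$'' follows from Riesz, dominated convergence and Eberlein--\v{S}mulian. Only the forward implication is that easy. For the converse, Eberlein--\v{S}mulian asks for a subsequence of each $(\hat f_n)\subset\hat A$ converging pointwise on $K$. Compactness of the pointwise closure in $\C^K$ only provides cluster points of nets. This is the substantial part of Grothendieck's theorem on $C(K)$, and you should either cite it or prove it, as follows.
\begin{enumerate}
\item Map $K$ onto the compact metrizable space $L=\{(\hat f_n(\om))_n\colon\om\in K\}$.
\item Extract a diagonal subsequence converging at countably many points of $K$ whose images are dense in $L$.
\item Every pointwise cluster point of this subsequence is continuous (by the first step), factors continuously through $L$, and is therefore determined by its values on that dense set.
\item Hence the subsequence converges pointwise to a continuous function, i.e.\ weakly.
\end{enumerate}

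Finally, in the necessity part the labels are swapped: $\lim_i f(x_i)$ is $\ell_1$, not $\ell_2$. The Mazur detour is also unnecessary. If $p$ is a cluster point of $(x_i)$ in $\beta G$, then $\ell_1=\lim_i f(x_i)=\hat f(p)$. Since $\lim_i f_j(x_i)$ exists and $\hat f_j$ is continuous, it equals $\hat f_j(p)$, so $\ell_2=\lim_j\hat f_j(p)=\hat f(p)$. No interchange of limits is involved.
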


\begin{corollary}\label{Gro2}
(\citep[Proposition 7]{Grothen}) Let $G$ be as in the previous proposition and let $f\in\Cbr(G)$. Then $f\in \WAP(G)$ if and only if, for all sequences $(x_i),(y_j)\subset G$ such that the following two limits exist
\[
\ell_1=\lim_i\Big(\lim_j f(x_iy_j)\Big)\quad \textrm{and}\quad \ell_2=\lim_j\Big(\lim_i f(x_iy_j)\Big)
\]
then $\ell_1=\ell_2$.
\end{corollary}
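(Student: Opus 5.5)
We deduce Corollary \ref{Gro2} from Proposition \ref{Gro1}, applied to the bounded set $A=fG=\{f\cdot g\colon g\in G\}\subset\Cb(G)$. Two preliminary remarks. First, since $f\in\Cbr(G)$, every right translate $f\cdot g$ lies in $\Cbr(G)$. Second, $\Cbr(G)$ is a closed subspace of $\Cb(G)$; by Hahn--Banach its weak topology is the one induced from $\Cb(G)$, and closed convex sets are weakly closed. Hence a subset of $\Cbr(G)$ is weakly relatively compact in $\Cbr(G)$ if and only if it is so in $\Cb(G)$. By the Grothendieck result quoted before the statement, $Gf$ is weakly relatively compact if and only if $fG$ is. So $f\in\WAP(G)$ if and only if $fG$ is weakly relatively compact in $\Cb(G)$.

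\emph{Necessity.} Assume $f\in\WAP(G)$. Let $(x_i),(y_j)\subset G$ be sequences for which the two iterated limits $\ell_1,\ell_2$ of $f(x_iy_j)$ exist. Put $f_j=f\cdot y_j\in fG$, so that $f_j(x_i)=f(x_iy_j)$. The limits in Proposition \ref{Gro1} are then exactly $\ell_1$ and $\ell_2$. Since $fG$ is weakly relatively compact, Proposition \ref{Gro1} gives $\ell_1=\ell_2$.

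\emph{Sufficiency.} Assume the double-limit condition holds for $f$. Take sequences $(x_i)\subset G$ and $(f_j)\subset fG$ such that the iterated limits of $f_j(x_i)$ exist. Each $f_j$ has the form $f\cdot y_j$ for some $y_j\in G$; choose one such $y_j$ for each $j$. Then $f_j(x_i)=f(x_iy_j)$, and the hypothesis gives $\ell_1=\ell_2$. By Proposition \ref{Gro1}, $fG$ is weakly relatively compact, so $f\in\WAP(G)$.

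The argument is essentially bookkeeping once Proposition \ref{Gro1} and the $Gf\leftrightarrow fG$ symmetry are in hand. The only point needing care is the identification of weak relative compactness in $\Cbr(G)$ with weak relative compactness in $\Cb(G)$, which is handled by the Hahn--Banach remark above.
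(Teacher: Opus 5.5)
Your reduction to Proposition \ref{Gro1} with $A=fG$ and $f_j=f\cdot y_j$, so that $f_j(x_i)=f(x_iy_j)$, is the natural derivation. The paper gives no proof of its own, only the citation of Grothendieck. Your bookkeeping is fine, including the remark on $\Cbr(G)$ versus $\Cb(G)$.

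The weak point is the step ``$Gf$ is weakly relatively compact if and only if $fG$ is''. The paper attributes that equivalence to Grothendieck's Proposition 7, which is precisely the statement you are proving. As written, your argument is therefore circular at exactly the point where it must connect two things: the double-limit condition, which via $f_j=f\cdot y_j$ speaks about $fG$, and the definition of $\WAP(G)$, which is in terms of the left orbit $Gf$.

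The repair is short, and it makes the symmetry a consequence rather than an input. Apply Proposition \ref{Gro1} to $A=Gf$ as well. A sequence in $Gf$ has the form $f_j=t_j\cdot f$, and $f_j(s_i)=f(t_j^{-1}s_i)$. So the two iterated limits of Proposition \ref{Gro1} are $\lim_i\lim_j f(t_j^{-1}s_i)$ and $\lim_j\lim_i f(t_j^{-1}s_i)$. Set $x_k=t_k^{-1}$ and $y_k=s_k$, and interchange the names of the two indices. These limits then become exactly $\ell_2$ and $\ell_1$ of the statement. Since $(t_k)\mapsto(t_k^{-1})$ is a bijection on sequences in $G$, the criterion of Proposition \ref{Gro1} for $Gf$ is literally the double-limit condition of the corollary.

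This proves the corollary directly from the definition of $\WAP(G)$. Combined with your computation for $fG$, it also yields the $Gf\leftrightarrow fG$ equivalence that you had been importing.
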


\begin{proposition}\label{relcpt}
Let $G$ be a topological group and let $(f_j)_{j\geq 1}\subset \WAP(G)$ be a sequence that has the following properties:
\begin{enumerate}
\item there exists $C>0$ such that $0\leq f_j\leq C$ for every $j$;
\item one has $f_j\geq f_{j+1}$ for every $j$.
\end{enumerate}
Then the set $A=\bigcup_{j\geq 1}Gf_j=\{g\cdot f_j\colon j\geq 1, g\in G\}$ is weakly relatively compact in $\Cbr(G)$.
\end{proposition}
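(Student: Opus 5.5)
The plan is to verify the double limit criterion of Proposition~\ref{Gro1} for the bounded set $A$ (it is bounded by $C$ because of (1)). So let $(x_i)\subset G$ be a sequence, and let $h_j=g_j\cdot f_{n_j}\in A$ be a sequence such that
\[
\ell_1=\lim_i\lim_j h_j(x_i)\quad\textrm{and}\quad \ell_2=\lim_j\lim_i h_j(x_i)
\]
both exist. The aim is to show $\ell_1=\ell_2$. Passing to a subsequence in $j$ does not change either iterated limit, so I may replace $(h_j)$ by any subsequence. I then split into two cases according to the behaviour of $(n_j)$.

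\emph{Case 1: $(n_j)$ has a bounded subsequence.} Then some value $n$ occurs infinitely often. Along the corresponding subsequence, every $h_j$ lies in the single orbit $Gf_n$. This orbit is weakly relatively compact because $f_n\in\WAP(G)$. Proposition~\ref{Gro1}, applied to $Gf_n$, then gives $\ell_1=\ell_2$.

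\emph{Case 2: $n_j\to\infty$.} I pass to a subsequence with $(n_j)$ strictly increasing. By a diagonal extraction in $i$ (all quantities lie in $[0,C]$), I also arrange that for every $m$ and $j$ the limit
\[
a_{j,m}=\lim_i (g_j\cdot f_m)(x_i)
\]
exists. By a further diagonal extraction in $j$, I arrange that for every $m$ the limit $\lim_j (g_j\cdot f_m)(x_i)$ exists for each $i$.

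For each fixed $m$, consider the sequence $(g_j\cdot f_m)_j$, which lies in the weakly relatively compact set $Gf_m$. Applying Proposition~\ref{Gro1} to it along further subsequences, its two iterated limits agree; call the common value $L_m$. By hypothesis (2), $L_m$ is non-increasing in $m$ and $0\le L_m\le C$, so $L=\lim_m L_m$ exists.

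For the upper bound, note that $n_j\ge m$ for $j$ large. By (2), $0\le h_j\le g_j\cdot f_m$ pointwise for such $j$. This gives $\ell_1\le L_m$ and $\ell_2\le L_m$ for every $m$, hence $\ell_1,\ell_2\le L$.

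The step I expect to be the main obstacle is the matching lower bound, or, more realistically, a direct comparison of $\ell_1$ with $\ell_2$. Monotonicity alone only gives the one-sided estimate above, since the decreasing limit of the $f_m$ need not be attained uniformly. My first attempt would be to exploit the freedom in choosing $m$ as a function of $j$. One has
\[
\ell_2=\lim_j a_{j,n_j},
\]
and for $j'\ge j$ the sandwich $g_{j'}\cdot f_{n_{j'}}\le g_{j'}\cdot f_{n_j}$ holds. Combining this with the equality of iterated limits inside the single orbit $Gf_{n_j}$, I would try to show the reverse inequality $\ell_1\ge \ell_2$ directly, in addition to $\ell_2\ge\ell_1$. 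Concretely, I would fix $\varepsilon>0$, choose $j_0$ with $|a_{j,n_j}-\ell_2|<\varepsilon$ for $j\ge j_0$, and use Corollary~\ref{Gro2} for the single function $f_{n_{j_0}}$ to interchange the limits up to $\varepsilon$. The delicate point is controlling the gap between $f_{n_{j_0}}$ and $f_{n_j}$ for $j>j_0$. For that I would use positivity together with a second diagonal choice: select the subsequence so that $a_{j,n_k}$ is nearly constant in $k\ge j$.

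If this direct approach fails, the fallback is to pass to the $\WAP$-compactification. There each $f_m$ extends continuously to a decreasing sequence $F_m$, and the required statement reduces to showing that the pointwise cluster points of the translates $g\cdot f_{n}$ with $n\to\infty$ are continuous.
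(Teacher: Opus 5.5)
Case~1, the diagonal extractions and the bound $\ell_1,\ell_2\leq L$ are fine. The genuine gap is the step you flag as the main obstacle: it cannot be carried out, and neither can the fallback, because the proposition as stated is false. Take $G=\mathbb{Z}$ with the discrete topology, so that $\Cbr(G)=\ell^\infty(\mathbb{Z})$ with Gelfand spectrum $\beta\mathbb{Z}$. Let $f_j$ be the indicator function of $\{n\in\mathbb{Z}\colon |n|\geq j\}$. Each $f_j$ is $1$ minus a finitely supported function, hence lies in $\WAP(\mathbb{Z})$, and $1\geq f_j\geq f_{j+1}\geq 0$. Take $x_i=i$ and $h_j=f_j$; this is your Case~2 with every $g_j$ trivial and $n_j=j$. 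For fixed $i$, $f_j(i)=0$ once $j>|i|$, so $\ell_1=0$. For fixed $j$, $f_j(i)=1$ once $i\geq j$, so $\ell_2=1$. By Proposition~\ref{Gro1}, already $\{f_j\colon j\geq 1\}\subset A$ is not weakly relatively compact. In your notation $L_m=1$ for every $m$, so $\ell_2$ attains your upper bound while $\ell_1$ lies strictly below it. The inequality $\ell_1\geq\ell_2$ you are aiming for is exactly what fails, and no choice of $j_0$ or further extraction can repair it. The obstruction is structural. For fixed $i$, $\lim_j h_j(x_i)$ evaluates $f_{n_j}$ with $n_j\to\infty$, so it feels the non-uniform decreasing limit of the $f_m$. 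By contrast, $\lim_i h_j(x_i)$ involves one fixed $f_{n_j}$. Corollary~\ref{Gro2} applied to the single function $f_{n_{j_0}}$ gives no control of $f_{n_{j_0}}-f_{n_j}$ at the points $x_i$. Your fallback fails for the same reason: the Gelfand transforms $\hat f_n$ decrease to the indicator function of $\beta\mathbb{Z}\setminus\mathbb{Z}$, which is not continuous (the same happens on the $\WAP$-compactification).

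The paper's own proof breaks down at the same place, so it does not supply the missing idea. It sets $h=\inf_j h_j$ and asserts $h\in C(\Om)$ ``by Dini's Theorem''. But Dini's theorem presupposes that the monotone limit is continuous, and then yields uniform convergence. An infimum of continuous functions is in general only upper semicontinuous, and in the example above (all $g_n$ trivial) one gets $h=1_{\beta\mathbb{Z}\setminus\mathbb{Z}}$. Your remark that the decreasing limit of the $f_m$ need not be attained uniformly therefore identifies a missing hypothesis, not a technical difficulty. If one assumes in addition that $f_j\to f$ uniformly on $G$ (equivalently, by Dini, that $\inf_j\hat f_j$ is continuous on the spectrum), the conclusion holds and monotonicity is not even needed. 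Indeed, $f\in\WAP(G)$ because $\WAP(G)$ is norm closed. Given $\varepsilon>0$, choose $N$ with $\|f_j-f\|_\infty<\varepsilon$ for $j\geq N$; then $A$ lies in the $\varepsilon$-neighbourhood of the weakly relatively compact set $\bigcup_{j<N}Gf_j\cup Gf$. A bounded set that lies within $\varepsilon$ of a weakly compact set for every $\varepsilon>0$ is itself weakly relatively compact (Grothendieck's lemma).
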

\begin{proof}
Let $\Om$ be the Gelfand spectrum of $\Cbr(G)$, so that we identify $C(\Om)$ with $\Cbr(G)$. Since $A$ is bounded, Eberlein-Smulian Theorem \citep[Theorem A.12]{Glas} implies that it suffices to prove that, for every sequence $(g_n)_{n\geq 1}\subset G$, there exists a subsequence $(g_{n_k})_{k\geq 1}\subset (g_n)$ and $h\in C(\Om)$ such that
\[
\lim_{k\to\infty} g_{n_k}\cdot f_k(\omega)=h(\omega)\quad (\om\in \Om).
\]
As $Gf_1$ is weakly relatively compact, there exist a subsequence $(g_{n_k^{(1)}})\subset (g_n)$ and $h_1\in C(\Om)$ such that
\[
\lim_{k\to\infty}g_{n_k^{(1)}}\cdot f_1(\om)=h_1(\om)\quad (\om\in \Om).
\]
Assumption (1) implies that $0\leq h_1\leq C$. 

Next, as $Gf_2$ is weakly relatively compact, there exist a subsequence $(g_{n_k^{(2)}})\subset (g_{n_k^{(1)}})$ and $h_2\in C(\Om)$ such that 
\[
\lim_{k\to\infty}g_{n_k^{(2)}}\cdot f_2(\om)=h_2(\om)\quad (\om\in \Om).
\]
As $f_2\leq f_1$ and $\lim_{k\to\infty} g_{n_k^{(2)}}\cdot f_1=h_1$, one has $h_2\leq h_1$.

By induction, one finds subsequences $(g_{n_k^{(j)}})$ of $(g_n)$ and functions $h_j\in C(\Om)$ with the following properties:
\begin{itemize}
\item $(g_{n_k^{(j+1)}})\subset (g_{n_k^{(j)}})$ for every $j$;
\item $\lim_{k\to\infty} g_{n_k^{(j)}}\cdot f_{j}(\om)=h_{j}(\om)$ for all $j, \om\in\Om$;
\item $0\leq h_{j+1}\leq h_j\leq C$ for every $j$.
\end{itemize}
Set then $g_{n_k}=g_{n_k^{(k)}}$ for every $k$ and let $h=\inf_j h_j$. Then $h\in C(\Om)$ by Dini's Theorem and we have
\[
\lim_{k\to\infty} g_{n_k}\cdot f_k(\om)=h(\om)
\]
for every $\om\in\Om$.
\end{proof}

Let now $M$ be a von Neumann algebra, let $(M,H,J,P)$ be its standard form and let also $u:\Aut(M)\rightarrow U(H)$ be the canonical implementation of $\Aut(M)$ as described in Section 1. 

We henceforth fix a subgroup $G$ of $\Aut(M)$ equipped with the induced u-topology. For short, we will write $g(T)=u(g)Tu(g^{-1})$ for all $T\in B(H),g\in G$ whenever convenient. 
For $T\in B(H), \xi,\eta\in H$, the associated \textit{coefficient function} is denoted by $\xi*T*\eta$ and it is defined by
\[
\xi*T*\eta(h)\coloneqq \la h(T)\xi|\eta\ra\quad (h\in G).
\]
It is straightforward to verify that every such coefficient function belongs to $\Cbr(G)$, and that it is a constant function if and only if $T\in u(G)'$. See \citep[Lemma 2.2]{Jol}.

As in \citep[Lemma 2.1]{Jol}, the coefficient functions satisfy the properties stated in the next lemma; their proofs are similar to that of the quoted lemma, so that we omit them.

\begin{lemma}\label{coefficients}
Let $T\in B(H),\xi,\eta\in H, g\in G$. Then the following formulas hold:
\begin{equation}\label{2.1}
\xi* T^**\eta=\ol{\eta* T* \xi},
\end{equation}
\begin{equation}\label{2.2}
g\cdot(\xi* T* \eta)=(u(g)\xi)* T*(u(g)\eta),
\end{equation}
\begin{equation}\label{2.3}
(\xi* T*\eta)\cdot g=\xi* g(T)*\eta,
\end{equation}
\begin{equation}\label{2.4}
\xi* T*\eta\in \Cbr(G).
\end{equation}
Moreover, let us define $\f_{\xi,\eta}:G\rightarrow \C$ by
\begin{equation}\label{2.5}
\f_{\xi,\eta}(g)=\la u(g)\xi|\eta\ra\quad (g\in G).
\end{equation}
Then $\f_{\xi,\eta}\in \Cbr(G)$.
\end{lemma}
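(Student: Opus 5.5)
Each of the five statements will be checked directly from the definitions $\xi*T*\eta(h)=\la u(h)Tu(h^{-1})\xi|\eta\ra$ and $g\cdot f(s)=f(g^{-1}s)$, together with the facts that $u$ is a unitary representation and a homeomorphism onto its image for the strong operator topology.

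\emph{Formula \eqref{2.1}.} This is immediate:
\[
\xi*T^**\eta(h)=\la h(T)^*\xi|\eta\ra=\ol{\la h(T)\eta|\xi\ra}=\ol{\eta*T*\xi(h)}.
\]

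\emph{Formula \eqref{2.2}.} We compute
\[
g\cdot(\xi*T*\eta)(s)=\la u(g^{-1}s)Tu(s^{-1}g)\xi|\eta\ra=\la u(s)Tu(s^{-1})u(g)\xi|u(g)\eta\ra,
\]
using $u(g^{-1})^*=u(g)$. This is the value at $s$ of $(u(g)\xi)*T*(u(g)\eta)$.

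\emph{Formula \eqref{2.3}.} Similarly,
\[
(\xi*T*\eta)\cdot g\,(s)=\la u(s)u(g)Tu(g^{-1})u(s^{-1})\xi|\eta\ra=\la s(g(T))\xi|\eta\ra .
\]

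\emph{Claim \eqref{2.4}.} Boundedness is clear, since $|\xi*T*\eta|\le\|T\|\|\xi\|\|\eta\|$. For right uniform continuity, \eqref{2.2} gives
\[
\bigl|g\cdot(\xi*T*\eta)(s)-\xi*T*\eta(s)\bigr|\le\|T\|\bigl(\|u(g)\xi-\xi\|\,\|\eta\|+\|\xi\|\,\|u(g)\eta-\eta\|\bigr),
\]
after inserting and subtracting the mixed term $\la s(T)u(g)\xi|\eta\ra$. This bound is uniform in $s$ and tends to $0$ as $g\to1$, because $u$ is strongly continuous for the u-topology on $G$.

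For continuity of $\xi*T*\eta$ itself, note that $s\mapsto u(s)Tu(s^{-1})\xi$ is norm continuous, since $u(s)$ is uniformly bounded and strongly continuous. Alternatively one can use \eqref{2.3} together with continuity at the identity. Then $s\mapsto\la u(s)Tu(s)^*\xi|\eta\ra$ is continuous, because $u(s)^*\xi=u(s^{-1})\xi$ is continuous in $s$, inversion being continuous on the topological group.

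\emph{The function $\f_{\xi,\eta}$.} It is continuous and bounded by $\|\xi\|\|\eta\|$. Moreover
\[
g\cdot\f_{\xi,\eta}(s)=\la u(s)\xi|u(g)\eta\ra,
\]
so $\|g\cdot\f_{\xi,\eta}-\f_{\xi,\eta}\|_\infty\le\|\xi\|\,\|u(g)\eta-\eta\|\to0$.

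None of these steps is a real obstacle. The only point needing care is that all continuity statements rest on the strong continuity of $u$ with respect to the u-topology, which was recalled in Section 1.
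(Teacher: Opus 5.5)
Your proof is correct and is the same direct verification from the definitions that the paper has in mind; the paper omits it as analogous to Lemma 2.1 of its earlier reference on the unitary group. All continuity claims rest, as you say, on the strong continuity of $u$ on $G$ for the induced u-topology, and continuity of $f=\xi*T*\eta$ in fact already follows from your uniform estimate, since $|f(s)-f(s_0)|\le\|g\cdot f-f\|_\infty$ with $g=s_0s^{-1}\to1$.
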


We now define a space of operators on $H$ whose coefficients are weakly almost periodic functions on $G$. Compare with \citep[Definition 2.3]{Jol}.

\begin{definition}\label{defwap}
A linear, bounded operator $T\in B(H)$ is $G$-\textit{weakly almost periodic} if, for all $\xi,\eta\in H$, the coefficient function $\xi*T*\eta$ belongs to $\WAP(G)$. The set of all such operators is denoted by $\wap(M,G)$.
\end{definition}

The proof of the following theorem is analoguous to that of \citep[Theorem 2.4]{Jol}, so we essentially omit it.

\begin{theorem}\label{defE}
Let $M\subset B(H)$ and $G\subset\Aut(M)$ be as above. The set $\wap(M,G)$ possesses the following properties:
\begin{enumerate}
\item [(a)] It is a unital, norm-closed operator system: it is a closed, selfadjoint subspace of $B(H)$ which contains $1$, and hence it is generated by its positive elements.
\item [(b)] For every $T\in\wap(M,G)$ and all $S_1,S_2\in u(G)'$, then one has $S_1TS_2\in \wap(M,G)$; in particular, $u(G)'\subset \wap(M,G)$, and $u(G)'$ is the set of elements $T\in\wap(M,G)$ such that the coefficient functions $\xi* T*\eta$ are all constant.
\item [(c)] The ideal $K(H)$ of compact operators on $H$ is contained in $\wap(M,G)$. 
\item [(d)] There exists a (unique) linear, bounded unital map $\E:\wap(M,G)\rightarrow B(H)$ which is characterized by
\begin{equation*}
\la \E(T)\xi|\eta\ra=\mk_G(\xi* T*\eta) \quad (\xi,\eta\in H),
\end{equation*}
and which possesses the following properties: 
\begin{enumerate}
\item [(i)] $\E$ is completely positive;
\item [(ii)] for every $T\in\wap(M,G)$, one has $\E(T)\in u(G)'$;
\item [(iii)] for all $T\in\wap(M,G), S_1,S_2\in u(G)'$, one has $\E(S_1TS_2)=S_1\E(T)S_2$;
\item [(iv)] for every $T\in\wap(M,G)$, $\E(T)$ belongs to $K_G(T)$, which is the weakly closed convex hull of the orbit $\{g(T)\colon g\in G\}$ in $B(H)$;
\item [(v)] for every $T\in \wap(M,G)$ and every $g\in G$, the operator $g(T)$ belongs to $\wap(M,G)$ et $\E(g(T))=\E(T)$.
\end{enumerate}
\end{enumerate}
\end{theorem}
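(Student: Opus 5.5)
The plan follows the scheme of \citep[Theorem 2.4]{Jol}, working throughout with the identities of Lemma \ref{coefficients} and the invariant mean $\mk_G$ on $\WAP(G)$.

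\textbf{Part (a).} Linearity of $T\mapsto \xi*T*\eta$ shows that $\wap(M,G)$ is a subspace, and \eqref{2.1} together with the fact that $\WAP(G)$ is a $C^*$-algebra shows that it is selfadjoint. It contains $1$ because $\xi*1*\eta$ is constant. For norm closedness, use $\|\xi*T*\eta\|_\infty\le\|T\|\|\xi\|\|\eta\|$ and the norm closedness of $\WAP(G)$ in $\Cb(G)$. Since the space is a selfadjoint unital operator space, it is spanned by its positive elements.

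\textbf{Part (b).} For $S_1,S_2\in u(G)'$ one has $h(S_1TS_2)=S_1h(T)S_2$. Hence $\xi*S_1TS_2*\eta=(S_2\xi)*T*(S_1^*\eta)$, which lies in $\WAP(G)$ when $T$ does. The characterization of $u(G)'$ as the elements with constant coefficients is \citep[Lemma 2.2]{Jol}.

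\textbf{Part (c).} It suffices, by (a), to treat rank-one operators $T=\theta_{\alpha,\beta}$. Then
\[
\xi*T*\eta(h)=\la u(h)\beta|\eta\ra\,\ol{\la u(h)\alpha|\xi\ra}^{\,*},
\]
that is, a product of coefficients $\f_{\cdot,\cdot}$ of the unitary representation $u$ or their conjugates. Coefficients of unitary representations are weakly almost periodic: the orbit $G\f_{\xi,\eta}$ is the image of the weakly compact ball of $H$ under a weakly continuous map, or one can apply Corollary \ref{Gro2} with a weak-limit argument in $H$. Since $\WAP(G)$ is an algebra, the product lies in $\WAP(G)$.

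\textbf{Part (d).} The form $(\xi,\eta)\mapsto\mk_G(\xi*T*\eta)$ is sesquilinear and bounded by $\|T\|\|\xi\|\|\eta\|$, so it defines a unique operator $\E(T)$; unitality is clear. The properties are checked as follows.
\begin{enumerate}
\item[(i)] For complete positivity, note that for $[T_{kl}]\ge0$ the function $h\mapsto\sum_{k,l}\la h(T_{kl})\xi_l|\xi_k\ra$ is nonnegative, so positivity of $\mk_G$ gives the result.
\item[(ii)] By \eqref{2.2} and left invariance, $\la\E(T)u(g)\xi|u(g)\eta\ra=\la\E(T)\xi|\eta\ra$, so $\E(T)$ commutes with $u(G)$.
\item[(iii)] This follows from the identity established in (b).
\item[(v)] By \eqref{2.3}, the coefficients of $g(T)$ are right translates of those of $T$; these lie in $\WAP(G)$, and right invariance of $\mk_G$ gives $\E(g(T))=\E(T)$.
\end{enumerate}

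\textbf{Property (iv), the main obstacle.} We must show $\E(T)\in K_G(T)$, i.e.\ that it can be weakly approximated by convex combinations of the $g(T)$. The approach is Hahn--Banach: suppose $\E(T)\notin K_G(T)$. Weak-operator continuous functionals are finite sums $\sum_k\la\,\cdot\,\xi_k|\eta_k\ra$, so there is such a functional $\omega$ with
\[
\operatorname{Re}\omega(\E(T))>\sup_g\operatorname{Re}\omega(g(T)).
\]
The function $F=\sum_k\xi_k*T*\eta_k$ is in $\WAP(G)$, and its values are exactly $\omega(g(T))$. Then $\operatorname{Re}\mk_G(F)=\operatorname{Re}\omega(\E(T))$, but since $\mk_G$ is a mean, $\operatorname{Re}\mk_G(F)\le\sup\operatorname{Re}F$. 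This contradiction proves (iv).

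The only subtle point is that $\mk_G$ must be a genuine state on $\WAP(G)$, so that it lies in the closed convex hull of point evaluations. For a nonempty set $G$, the states of the commutative unital $C^*$-algebra $\WAP(G)$ are weak$^*$ limits of convex combinations of point evaluations, which gives exactly the needed bound. Uniqueness of $\E$ is immediate from the defining formula.
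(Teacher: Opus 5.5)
Your proposal is correct. The paper omits almost all of this proof and defers to \citep[Theorem 2.4]{Jol}. Your arguments for (a)--(c) and (d)(ii)--(v) are sound and are the natural ones: weak compactness of balls in $H$ for the coefficients of $u$ in (c), invariance of $\mk_G$ under left and right translations via \eqref{2.2} and \eqref{2.3}, and Hahn--Banach in the weak operator topology for (iv).

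The only item the paper actually argues is (d)(i), and there you take a different route. The paper amplifies: it views $G$ as $G^{(n)}\subset\Aut(M_n(M))$, identifies $M_n(\wap(M,G))$ with $\wap(M_n(M),G^{(n)})$ and $\E^{(n)}$ with the map attached to $G^{(n)}$. Complete positivity then reduces to the obvious positivity of that map. You instead note that $[h(T_{kl})]$ is the conjugate of $[T_{kl}]\geq 0$ by the unitary $\operatorname{diag}(u(h),\dots,u(h))$. So $h\mapsto\sum_{k,l}\la h(T_{kl})\xi_l|\xi_k\ra$ is a nonnegative element of $\WAP(G)$, and positivity of $\mk_G$ gives $\sum_{k,l}\la\E(T_{kl})\xi_l|\xi_k\ra\geq 0$.

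Your version is shorter and stays on $H\otimes\C^n$. The paper's identification, taken literally in its framework, passes through the standard form of $M_n(M)$ (on $H\otimes\C^n\otimes\C^n$) and the canonical implementation of $G^{(n)}$ there, which requires some checking. The paper's version has the merit of showing that the whole construction is compatible with amplification.

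Two small remarks. In (c) the stray ${}^*$ should go: with $\theta_{\alpha,\beta}\zeta=\la\zeta|\alpha\ra\beta$ one gets $\xi*\theta_{\alpha,\beta}*\eta=\ol{\f_{\alpha,\xi}}\,\f_{\beta,\eta}$. In (iv) the ``subtle point'' is unnecessary, since for any mean one has $\operatorname{Re}\mk_G(F)=\mk_G(\operatorname{Re}F)\leq\sup_G\operatorname{Re}F$ directly.
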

Let us just sketch the proof of Property (d)(i): Since $\mk_G$ is a unital, positive functional, it is obvious that $\E$ is a positive map. If $n\geq 2$, we embed $G$ into $\Aut(M_n(M))$ by defining, for $g\in G$, the automorphism $g^{(n)}\in \Aut(M_n(M))$ by
\[
g^{(n)}(x_{ij})=(g(x_{ij}))\quad (g\in G,(x_{ij})\in M_n(M)).
\]
We observe that the range $G^{(n)}$ of the embedding of $G$ is a subgroup of $\Aut(M_n(M))$.
Then $M_n(\wap(M,G))$ is identified in a natural way to $\wap(M_n(M),G^{(n)})$, and the corresponding map is
\[
\E^{(n)}((T_{ij}))=(\E(T_{ij}))\quad ((T_{ij})\in M_n(\wap(M,G)).
\]
This proves that $\E$ is completely positive.
\hfill $\Box$

\section{Proof of the main theorem}

Let us state a slightly more precise statement than Theorem \ref{mainThm}.

\begin{theorem}
Let $M$ be a von Neumann and $G\subset \Aut(M)$ a group of automorphisms of $M$. Then $M$ is $G$-finite if and only if $M\subset \wap(M,G)$. Moreover, if it is the case, the restriction to $M$ of the map $\E$ in Theorem \ref{defE} is the conditional expectation $\E^G$ from $M$ onto $M^G$ in Theorem \ref{thm1.1}.
\end{theorem}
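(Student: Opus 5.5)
Two directions have to be proved, plus the identification of $\E$ on $M$ with $\E^G$.

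\emph{($M\subset\wap(M,G)\Rightarrow$ $G$-finite).} Assume every coefficient $\xi*x*\eta$, $x\in M$, is in $\WAP(G)$. Then Theorem~\ref{defE}(d) provides the unital, completely positive map $\E$ on $M$ with $\E(x)\in u(G)'$ and $\E(x)\in K_G(x)$. On bounded sets the weak operator closure coincides with the $\sigma$-weak closure, so $K_G(x)\subset M$. Hence $\E(x)\in M\cap u(G)'=M^G$, because $g(y)=u(g)yu(g)^*$. By (d)(v), $\E\circ g=\E$.

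To get invariant normal states, take a vector $\xi\in H$ and set $\f(x)=\la\E(x)\xi|\xi\ra=\mk_G(\xi*x*\xi)$; it is $G$-invariant and positive. It need not be normal, since $\mk_G$ is only a mean. The fix is to choose $\xi$ $u(G)$-invariant. Then the coefficient $\xi*x*\xi(h)=\la u(h)xu(h)^*\xi|\xi\ra=\la x\xi|\xi\ra$ is constant, and $\f=\omega_\xi$ is normal.

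So it suffices to show that the vector states $\omega_\xi$ with $\xi\in P$ $u(G)$-fixed separate $M_+$. For this I would apply $\E$ to operators outside $M$. Let $x\in M_+$ be nonzero and pick $\zeta\in P$ with $\la x\zeta|\zeta\ra>0$. The rank-one projection $e_\zeta$ is compact, so it lies in $\wap(M,G)$ by (c). Then $\E(e_\zeta)\in u(G)'$ is positive and lies in $K_G(e_\zeta)$, a weakly compact convex set of positive trace-class-bounded operators. The difficulty is that $\E(e_\zeta)$ may vanish, since weak limits of compact orbits can escape to $0$.

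A better route is to use St\o rmer's criterion (3) of Theorem~\ref{thm1.1}: show that $G(\psi)$ is relatively $\sigma(M_*,M)$-compact for $\psi=\omega_{\xi,\eta}$. Every $\psi\in M_*$ has this form in standard form, and $\psi\circ g(x)=\la x u(g)^*\xi|u(g)^*\eta\ra$. By Grothendieck's double-limit criterion, relative weak compactness of $\{\psi\circ g_i\}$ in $M_*$ amounts to $\lim_i\lim_j\psi(g_i(x_j))=\lim_j\lim_i\psi(g_i(x_j))$ for sequences $(x_j)$ in the unit ball of $M$, whenever both limits exist. This is where I expect the main obstacle. The WAP hypothesis handles a single $x$ via Proposition~\ref{Gro1}, but here $x_j$ varies.

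To overcome this I would use Proposition~\ref{relcpt}. It suffices to test against decreasing sequences of positive elements, or projections. Indeed, by Akemann's characterization, weak compactness in $M_*$ amounts to $\sup_i|\psi\circ g_i(p_j)|\to0$ for every decreasing sequence of projections $p_j\downarrow0$; equivalently, to uniform $\sigma$-additivity. Given such $p_j$, the functions $f_j=\xi*p_j*\xi$ are WAP, satisfy $0\le f_j\le\|\xi\|^2$, and decrease in $j$. Proposition~\ref{relcpt} then gives weak relative compactness of $\bigcup_j Gf_j$. Combined with Proposition~\ref{Gro1}, applied to evaluation at the $g_i$, this yields the double-limit equality. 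Since $f_j(g)\to0$ pointwise for each $g$ by normality, the iterated limit $\lim_j\lim_i$ equals $\lim_i\lim_j=0$. This gives the uniform estimate. Polarization handles $\xi\ne\eta$.

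\emph{($G$-finite $\Rightarrow M\subset\wap(M,G)$).} Fix $x\in M$ and $\xi,\eta$, and let $f=\xi*x*\eta$, so $f(h_1h_2)=\la h_1(h_2(x))\xi|\eta\ra$. I apply Corollary~\ref{Gro2} to sequences $(x_i),(y_j)$. By Theorem~\ref{thm1.1}(3) and (4), the orbit of $\omega_{\xi,\eta}$ is relatively weakly compact, so after passing to subsequences $\omega_{\xi,\eta}\circ x_i\to\psi$ weakly in $M_*$. Also $y_j(x)\to y$ $\sigma$-weakly, using bounded nets and a subsequence/ultrafilter argument compatible with the existing limits. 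Then both iterated limits equal $\psi(y)$, because weak convergence in $M_*$ tests against any fixed element and normal functionals are $\sigma$-weakly continuous. Subsequence issues are handled by Eberlein--\v Smulian.

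\emph{Identification.} Finally, on $M$ the map $\E(x)$ lies in $K_G(x)\cap M^G$, which is the singleton $\{\E^G(x)\}$ by Theorem~\ref{thm1.1}(2). Hence $\E|_M=\E^G$.
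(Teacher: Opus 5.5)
\textbf{Verdict: genuine gap in the direction $M\subset\wap(M,G)\Rightarrow$ $G$-finite.} Your $(\Rightarrow)$ direction is correct: you use St\o rmer's condition (3), taking a weakly convergent subsequence of $\{\omega_{\xi,\eta}\circ x_i\}$ in $M_*$ and a $\sigma$-weak subnet of $(y_j(x))$, where the paper uses condition (4). This is a harmless and slightly cleaner variant. Your identification of $\E|_M$ with $\E^G$ is also correct.

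The problem is the converse direction. You pass through condition (3) and Akemann's criterion instead of checking condition (4) directly as the paper does. The engine, however, is the same. You know that $f_j=\xi*p_j*\xi$ lies in $\WAP(G)$, that $0\le f_{j+1}\le f_j\le\|\xi\|^2$, and that $f_j\to0$ pointwise on $G$. From this you conclude, via Proposition~\ref{relcpt} and Proposition~\ref{Gro1}, that $\lim_j\lim_i f_j(g_i)=0$, and hence $\sup_{g\in G}f_j(g)\to0$.

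This inference is false, and Proposition~\ref{relcpt} is false as stated. Here is a counterexample.
\begin{itemize}
\item Take $G=\mathbb{Z}$, $\alpha$ irrational, and $\beta\notin\mathbb{Z}\alpha+\mathbb{Z}$. Let $d(t)$ be the distance from $t\in\R$ to $\mathbb{Z}$, and set $f_j(n)=\max(0,1-j\,d(n\alpha-\beta))$.
\item Each $f_j$ is a continuous function on $\R/\mathbb{Z}$ composed with $n\mapsto n\alpha$, so it is almost periodic and hence weakly almost periodic.
\item One has $0\le f_{j+1}\le f_j\le1$, and $f_j(n)\to0$ for every $n$, since $d(n\alpha-\beta)>0$.
\item Choose $n_i$ with $d(n_i\alpha-\beta)\to0$. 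Then $\lim_i\lim_j f_j(n_i)=0$ while $\lim_j\lim_i f_j(n_i)=1$.
\end{itemize}
So $\{f_j\}$ is not weakly relatively compact, and $\sup_n f_j(n)=1$ for every $j$. The proof of Proposition~\ref{relcpt} breaks where it asserts that $\inf_j h_j\in C(\Om)$ ``by Dini's theorem''. The infimum of a decreasing sequence of continuous functions is only upper semicontinuous, and Dini's theorem presupposes a continuous limit.

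Consequently the key step of your argument is unsupported: nothing you use about the $f_j$ distinguishes them from the example above. The paper's own proof of this direction rests on exactly the same step (Proposition~\ref{relcpt} applied to $f_j=\xi*x_j*\xi$), so following it does not close the gap. To obtain $\sup_{g}\la g(p_j)\xi|\xi\ra\to0$, you would have to use the hypothesis $M\subset\wap(M,G)$ for many more elements of $M$ than the $p_j$ themselves. Natural candidates are the sums $\sum_{k\in S}(p_k-p_{k+1})$ for $S\subset\mathbb{N}$, combined with the group structure of $G$. The proposal contains no such argument.

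The rest of this direction is sound: the reduction via Akemann's criterion, the passage from a double-limit identity to the uniform estimate (choose $g_i$ with $f_i(g_i)\ge\varepsilon$ and use monotonicity), and the polarization step.
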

\begin{proof}
$(\Rightarrow)$ Let us fix $x\in M,\xi,\eta\in H$. We must show that $\xi* x*\eta\in \WAP(G)$. It suffices to prove that $C(x)\coloneqq \{(\xi* x*\eta)\cdot g\colon g\in G\}$ is weakly relatively compact in $\Cbr(G)$. For $g,h\in G$, let us compute:
\[
(\xi* x*\eta\cdot g)(h)=\xi* x*\eta(hg)=\la u(hg)xu(g^{-1}h^{-1})\xi|\eta\ra
=\la h(g(x))\xi|\eta\ra=\xi* g(x)*\eta(h)
\]
thus $C(x)=\{\xi* g(x)*\eta\colon g\in G\}$. By Corollary \ref{Gro2}, it suffices to prove that, for all sequences $(h_i),(g_j)\subset G$ for which the two double limits
\[
\ell_1=\lim_i(\lim_j(\xi* g_j(x)*\eta)(h_i))
\quad \textrm{et}\quad
\ell_2=\lim_j(\lim_i(\xi* g_j(x)*\eta)(h_i))
\]
exist, then $\ell_1=\ell_2$.

Up to passing to subsequences if necessary, we assume that $(g_j)$ converges to $\theta\in \ol G\subset B_*(M)$ and that $(h_i)$ converges to $\psi\in B_*(M)$, by Theorem \ref{thm1.1}(4).

Let $\omega_{\xi,\eta}\in M_*$ the normal linear form $y\mapsto \la y\xi|\eta\ra$. On the one hand, for every  $i$, one has
\[
\lim_j\la h_i(g_j(x))\xi|\eta\ra=\la h_i(\theta(x))\xi|\eta\ra=\la h_i\circ\theta,x\otimes \omega_{\xi,\eta}\ra=\la h_i,\theta(x)\otimes\omega_{\xi,\eta}\ra.
\]
Hence $\ell_1=\lim_i\la h_i,\theta(x)\otimes\omega_{\xi,\eta}\ra=\la\psi\circ\theta(x)\xi|\eta\ra$. 

On the other hand, one has $\ell_2=\lim_j \la \psi(g_j(x))\xi|\eta\ra=\la \psi\circ\theta(x)\xi|\eta\ra=\ell_1$.\\
$(\Leftarrow)$ Let us assume that $M\subset \wap(M,G)$. We are going to prove that $M$ and $G$ satisfy Condition (4) of Theorem \ref{thm1.1}, i.e. that every limit point $\psi$ in $B(M)$ of a sequence $(g_i)_{i\geq 1}\subset G$, with respect to the w$^*$-topology, belongs to $B_*(M)$. 

Let then $(g_i)_{i\geq 1}\subset G$ which $w^*$-converges to $\psi\in B(M)$. It is obvious that $\psi\in B(M)$ is unital and completely positive. Let us fix a sequence $(x_j)_{j\geq 1}\subset M_+$ such that $x_j\searrow 0$. We are going to show that $\lim_j\psi(x_j)=0$ with respect to the weak operator topology. In order to do that, let $\xi\in H$ and let us define $f_j:G\rightarrow \R^+$ by
\[
f_j(g)=\la g(x_j)\xi|\xi\ra=\xi*x_j*\xi(g)  \quad (g\in G).
\]
One has $f_j\in \WAP(G)$ for every $j$ by hypothesis. Set $A=\{f_j\colon j\geq 1\}\subset \Cbr(G)$, which is weakly relatively compact by Proposition \ref{relcpt}. 

Next, one has 
\[
\lim_j f_j(g_i)=\lim_j \la g_i(x_j)\xi|\xi\ra=0
\]
for every $i$ because $g_i\in \Aut(M)$ and $x_j\searrow 0$. Thus $\ell_1\coloneqq \lim_i(\lim_j f_j(g_i))=0$. Finally, 
\[
\lim_j(\lim_if_j(g_i))=\lim_j (\lim_i\la g_i(x_j)\xi|\xi\ra)=\lim_j\la \psi(x_j)\xi|\xi\ra\eqqcolon \ell_2.
\]
(Notice that the sequence $(\la\psi(x_j)\xi|\xi\ra)_j$ converges since it has its values in $\R_+$ and since it is decreasing.) By Proposition \ref{Gro1}, one has $\ell_2=0$, and then $\psi\in B_*(M)$.

Suppose at last that $M\subset \wap(M,G)$, and let $\E:\wap(M,G)\rightarrow u(G)'$ be the map of Theorem \ref{defE} and $\E^G:M\rightarrow M^G$ the conditional expectation of Theorem \ref{thm1.1}. 

By Properties (d)(ii) and (d)(iv) in Theorem \ref{defE}, for every $x\in M$, one has 
\[
\E(x)\in K_G(x)\cap u(G)'=\{\E^G(x)\},
\]
which proves that $\E(x)=\E^G(x)$. 
\end{proof}

\begin{remark}
Assume that $M\subset \wap(M,G)$. It is tempting to use the restriction to $M$ of the unital, cp map $\E$ in Theorem \ref{defE} to infer that $M$ is $G$-finite by condition (2) in Theorem \ref{thm1.1}. The problem is that, a priori, there is no reason for $\E$ that to be normal or faithfull on $M$. This explains why we had to prove another characterization of $G$-finiteness in the proof of the above theorem.
\end{remark}

\par\vspace{3mm}

\bibliographystyle{plain}
\bibliography{refGFinite}

\begin{thebibliography}{10}

\bibitem{HarpeMoy}
P.~de~la Harpe.
\newblock Moyennabilit\'e du groupe unitaire et propri\'et\'e {P} de {S}chwarz
  des alg\`ebres de von {N}eumann.
\newblock In P.~de~la Harpe, editor, {\em Alg\`ebres d'{O}p\'erateurs}, volume
  725 of {\em Lecture Notes in Mathematics}, pages 220--227. Springer-Verlag,
  Berlin, 1979.

\bibitem{EymardMoy}
P.~Eymard.
\newblock {\em Initiation \`a la th\'eorie des groupes moyennables}.
\newblock Lecture Notes in Math. 497, 1975.

\bibitem{Glas}
E.~Glasner.
\newblock {\em Ergodic {T}heory via {J}oinings}.
\newblock Amer. Math. Soc., Providence, RI, 2003.

\bibitem{Greenleaf}
F.P. Greenleaf.
\newblock {\em Invariant means on topological groups}.
\newblock Van Nostrand, New York, 1969.

\bibitem{Grothen}
A.~Grothendieck.
\newblock Crit\`eres de compacit\'e dans les espaces fonctionnels g\'en\'eraux.
\newblock {\em Amer. J. Math.}, 74:168--186, 1952.

\bibitem{haa}
U.~Haagerup.
\newblock Standard forms of von {N}eumann algebras.
\newblock {\em Math. Scand.}, 37:271--283, 1975.

\bibitem{Jol}
P.~Jolissaint.
\newblock Almost and weakly almost periodic functions on the unitary group of
  von {N}eumann algebras.
\newblock {\em J. of Operator Theory}, 87:101--124, 2022.

\bibitem{KS}
I.~Kov\'acs and J.~Sz\"ucs.
\newblock Ergodic type theorems in von {N}eumann algebras.
\newblock {\em Acta Sci. Math.}, 27:233--246, 1966.

\bibitem{Stoermer}
E.~St{\o}rmer.
\newblock Invariant states of von {N}eumann algebras.
\newblock {\em Math. Scand.}, 30:253--256, 1972.

\bibitem{Tak1}
M.~Takesaki.
\newblock {\em Theory of {O}perator {A}lgebras {I}}.
\newblock Springer Verlag, New York, 1979.

\bibitem{Tak2}
M.~Takesaki.
\newblock {\em Theory of {O}perator {A}lgebras {II}}.
\newblock Springer Verlag, Encyclopedia of Mathematical Sciences, Berlin, 2003.

\bibitem{Yeadon}
F.~Yeadon.
\newblock A new proof of the existence of a trace in a finite von {N}eumann
  algebras.
\newblock {\em Bull. Amer. Math. Soc.}, 77:257--260, 1971.

\end{thebibliography}

\end{document}